\begin{document}

\title{Graded Galois Lattices and Closed Itemsets
\thanks{This article has been derived from the Ph. D. thesis by the first author under the supervision of the second author at \href{http://www.yu.ac.ir/index.aspx?siteid=30}{Yasouj University}, Yasouj, Iran.}
}

\titlerunning{Graded Galois Lattices and Closed Itemsets}        

\author{Reza Sotoudeh   \and  Hamidreza Goudarzi  \and  Ali Akbar Nikoukar}

\authorrunning{R. Sotoudeh  \and  H. R. Goudarz \and A. A. Nikoukar} 

\institute{R. Sotoudeh \at
              Department of Mathematics, Yasouj University, Yasouj, Iran. \\
              \email{reza.sotoudeh@yahoo.com}           
           \and
           H. R. Goudarz (Corresponding author) \at
              Department of Mathematics, Yasouj University, Yasouj, Iran. \\
              \email{goudarzi@yu.ac.ir} 
           \and
           A. A. Nikoukar \at
           Department of Mathematics, Yasouj University, Yasouj, Iran. \\
              \email{nikoukar@yu.ac.ir} 
}

\date{}

\maketitle

\begin{abstract}
The Galois lattice is a graphic method of representing knowledge structures. The first basic purpose in this paper is to introduce a new class of Galois lattices, called graded Galois lattices. As a direct result, one can obtain the notion of graded closed itemsets (sets of items), to extend the definition of 
closed itemsets. Our second important goal in this paper, is related to set a constructive method, computing the graded formal concepts and graded closed itemsets. We mean by a constructive method, a method that builds up a complete solution from scratch by sequentially adding components to a partial solution until the solution is complete. Besides of computational aspects, our methods in this paper are based on the strong results obtained by special mappings in the realm of domain theory. To reach the fertilized consequences and constructive algorithms, we need to push the study to the structures of Banach lattices.

\keywords{Graded Galois lattice, Graded closed itemset, Formal concept, Banach lattice, Formal context, Data mining context.}
\subclass{06A15 \and  06B23.}
\end{abstract}
\section{Introduction} \label{sec1}
Formal concept analysis (FCA) is an applicable reconstructing of lattice theory, although it has a deep root in the philosophy, its mathematical background  dates back to the works of Wille and Ganter (\cite{Ganter1999}, \cite{Wille1982}). The basic framework of  FCA starts with the data analysis of a given matrix form its data base which rows correspond to objects,  columns play as a set of attributes, and the values denote their relationship. 

In recent years, the comprehensive surveys show that, there is a good trend to apply the FCA methods in different areas. For example, as see in
\cite{Poelmans2013a}, \cite{Poelmans2013b} and \cite{Poelmans2014}, a total of 544 research papers have been collected from the prominent indexing system such as the Scopus, Google Scholars, leading scientific data bases such as ACM Digital Library, IEEE Xplore, Science Direct, Springer Links, etc. Also one can refer to some outstanding FCA conferences like ICFCA, ICCS, CLA, etc. \cite{Singh2016}. Among them, nearly 352 works have been based on their innovative contents. As FCA is concerned with the formalization of concepts and conceptual thinking, it has been applied in many disciplines such as software engineering \cite{Poshyvanyk2012}, machine learning (\cite{Ignatov2015b}, \cite{Korei2013} and \cite{Singh2017}), knowledge discovery (\cite{Aswani2011a}, \cite{Aswani2011b} and \cite{Aswani2014}) and ontology construction \cite{Jindal2020}, during the last 20-25 years.
\par
In details, one can remind some aspects of FCA in data mining and machine learning (see \cite{Ignatov2015}, \cite{Janostik2020}, \cite{Sumangali2017} and references therein):
\begin{itemize}
\item[•] 	The earlier works in FCA, behind the frequent itemset mining and association rules notions.
\item[•] 	Multimodal clustering (biclustering and triclustering).
\item[•] 	The role of FCA in classification: JSM-method, version spaces, and decision trees.
\item[•] 	Pattern structures for data with complex descriptions.
\item[•] 	FCA-based Boolean matrix factorization.
\item[•] 	Educational data mining cases study.
\item[•] 	Exercises with JSM-method in QuDA (Qualitative Data Analysis): solving classification task.
\end{itemize}

From another point of view, we may consider it as a mathematical technique. So, reinforcement and extension of notions seeing as algebraic or analytic objects make it as a powerful and enriched tool in applied areas. Over the past 30 years, the initial theory of FCA has been combined with and fertilized by other research domains in mathematics, including deception logics and conceptual graphs \cite{Singh2016}. To handle the uncertainty and vagueness in data, FCA has been successfully extended with a fuzzy setting, an interval--valued fuzzy setting, possibility theory, a rough setting and triadic concept analysis (\cite{Brito2018}, \cite{Dias2015}, \cite{Doerfel2012}, \cite{Poelmans2013b}, \cite{Poelmans2014}, \cite{Singh2017} and \cite{Yan2015}).

During the past recent years, fixed point theory has been a pioneer and prominent method to solve many problems in the territory of nonlinear analysis. Among the abundant results and theoretical consequences, one may consider a useful theorem related to Banach contraction theorem \cite{Agrawal2018}, specially, for its compatibility with discrete and ordered structures. Also its proof yields a constructive method for programmers to follow the new technical methods in their works. FCA methods have also benefited from its consequences extensively. In a complete lattice extracted from a transactional database, its formal concepts (and closed itemsets) are exactly correspondent to the fixed points of its closure operator \cite{Caspard2003}, and the way to compute them is based on iteration method. We believe that new theoretical and technical results from fixed point theory have extensively changed the face of FCA in the near future. So our main goal in this paper helps to realize this ambition.

The general idea and mathematical base of FCA have been formally presented by a tabular form of data. Also the key point to compute the Galois lattice is using the Galois connection. This can be realized by composition of Galois connectors which ends to the closure operator. Moreover, the existence of Galois lattice as a complete lattice, is undertook the main theorem of concept lattice \cite{Wille1982}. The first step in this paper is to enhance the data representation, from tabular contexts to multivalued functions. This generalization can help the researchers to benefit a rich source called fixed point theory. For example as the first measure, by applying the Banach contraction theorem on domains, one can compute the formal concepts and closed itemsets directly (without using the Galois connections). This can be done by iteration methods.

But the main purpose has been concentrated on setting up the new idea of graded concept lattices. While the structure of concept lattice has a hierarchical relation but grading the different levels of concepts as classified spectrums are so vital. For example as a direct result, this helps us to extend the closed labeled itemsets as the graded closed itemsets (see \cite{Garriga2008}). Moreover, the theoretical aspect behind these notions can be supported by approximate fixed point results which has a main role in applicable areas \cite{Alfuraidan2016}.

The ordered balls induced by this topology, on a Banach lattice, increase our ability to rank the dispersion of data. This new topological view point has many benefits even in applied territory. As a typical example, this yields another perception to rank the different sources pertained to a set of keywords in search engines.
\section{Preliminaries} \label{sec2}
Domain theory is a field of mathematics deals with some class of ordered sets, the ordered sets employing superma. The core idea of introducing domains can be summed up in this fact that they can divide the space with the help of ordered balls (their induced topology). Given that the space is also equipped with a vector space structure, all of these balls can be considered at the origin and with different radius. Therefore, considering that computer processes are generally a series of processes starting from a specific point, their compatibility with this type of mathematical structure become clearer  (\cite{Abramsky1994}, \cite{Keimel2017} and \cite{Stoltenberg2008}). So, let us mention an introduction to domain theory and other necessary prerequisites. 

\begin{definition} [\cite{Davey2012}] \label{def2.1}
Suppose that $(P,\leq)$ is a partially ordered set (poset). An element $m^* \in P$ is called the maximal element of $P$,
if for all $x \in P$ such that $m^* \leq x$, implies that $m^*=x$. The dual case for minimal element $m_*$ can be defined by the same way. For any subset $A$ of $P$, the greatest element of $A$ is an element $g \in A$ such that $x\leq g$, for all $x \in A$. The top element of $P$ is the greatest element of $P$. An upper bound of $A$ is an element $u \in P$ such that $ x \leq u$, for each $x \in A$. The supremum (infimum) of $A$ is the unique element $s \in P$ which is the minimal (maximal) element of the set of upper (lower) bounds of $P$. If the supremum (infimum) of $A$ exists, we will denote it by $\vee A$ ($\wedge A$). $A$ is said to be a directed set if each pair of elements in $A$ has supremum. $(P,\leq)$ is called a directed complete partially ordered set (dcpo) or a domain, if every directed subset of $P$ has supremum.
\end{definition}

\begin{definition} [\cite{Davey2012}] \label{def2.2}
Suppose that $(P, \leq)$ is a nonempty poset. If for any $x, y \in P$,
$$\inf \{x, y\} = x \wedge y \, , \sup \{x, y\}=x \vee y$$
exist, then
$(P, \leq)$ is called a lattice. If for any subset $A$ of $P$, $\wedge A$ and $\vee A$ exist, then we say that $(P, \leq)$ is a complete lattice.
\end{definition}

\begin{definition} [\cite{Schaefer1974}] \label{def2.3}
An ordered vector space is a real vector space $ P $ which is also an ordered space with the linear and order structures connected by the following implications:
\begin{itemize}
\item[(i)]
If $ x, y, z \in P $ and $x \leq y $, then $ x + z \leq y + z $,
\item[(ii)]
if $  x, y  \in P  $,  $  x \leq  y   $ and $  0  \leq  \alpha  \in  \mathbb{R}  $, then $  \alpha  x  \leq  \alpha  y. $
\end{itemize}
An ordered vector space, which is also a lattice, is a vector lattice or Riesz space. For a vector lattice $ P $ and $ x \in P $,
the positive part of $ x $ is $ x^{+} = x \vee 0 $ and negative part of $ x $ is $ x^{-} = (-x) \vee 0 $. 
The modulus of $ x $ is  defined as $ \mid x \mid = x \vee (-x) $.
\end{definition}

\begin{definition} [\cite{Schaefer1974}] \label{def2.4}
A normed lattice is a normed space which is also a vector lattice, in which
 $ \vert x \vert < \vert y \vert $ implies that $ \Vert x \Vert \leq \Vert y \Vert $.
A normed lattice which is also a Banach space is called a Banach lattice.
For each Banach lattice $(X, \|.\|)$, closed balls are defined by 
\begin{center}
$\overline{B}_r (x) = \{ y \in X : \|x-y\| \leq r \},$
\end{center}
where $x \in X$ and $r>0$. Let $[\overline{B}_r (x)]$ denotes the set of all closed balls on $X$ centered at $x$. For each $r,s >0$ and
 $x,y \in X$, one can define
 \begin{center}
$\overline{B}_r (x) \leq \overline{B}_s (y),$ iff $r \geq_{\mathbb{R}} s,$
\end{center}
where $r \geq_{\mathbb{R}} s $ denotes the ordinary order on real numbers.
\end{definition}

According to transitional property (induced by addition) in a Banach lattice, one can consider each ball $\overline{B}_r (x)$ as a closed ball centered at origin $0_X$. Since $[\overline{B}_r (x)]_{x \in X , r >0}$ can actually be considered as the set of all closed balls at origin, we denote  it by 
$[B_r (0_X)]_{ r > 0}$. Also this order can be considered as the reverse inclusion $\subseteq$. It is clear that 
$([B_r (0_X)]_{ r > 0} , \subseteq)$ is a poset (a nested sequence of closed balls). In the sequel, we assume that $(X , \|.\|)$
is a separable Banach lattice. We refer $([B_r (0_X)]_{ r > 0} , \subseteq)$ as $[B_r]$ and clearly  it is a domain 
Banach lattice. A sequence in $[B_r]$ will be denoted by $\{[x_n,r_n]\}$ \cite{Abramsky1994}.

\begin{definition} [\cite{Abramsky1994}] \label{def2.5}
Suppose $\{[x_n,r_n]\}$ is  an ascending sequence. We say that it is a Cauchy sequence, if 
$$(x_n - x_m) \xrightarrow{\parallel . \parallel} 0,$$
 while $(r_n - r_m) >0$. Also $\{[x_n,r_n]\}$ converges to $[x,r]$, if
 $x_n \xrightarrow{\parallel . \parallel} x$ and $r_n \longrightarrow r$. Especially, if $r_n \longrightarrow 0$, this means that $\{[x_n,r_n]\}$ 
 converges to $[x,0]$. So each maximal element can be contained in $[0_X,r]$, for some $r > 0$.
 \end{definition}
 
 \begin{theorem} [\cite{Abramsky1994}] \label{th2.6}
 Let $[B_r]$ be a domain, $\{[x_n,r_n]\}$ an ascending sequence and $[x,r]$ be an element of $[B_r]$. Then the following statements are equivalent:
 \begin{itemize}
\item[$\bullet$]
 $[x,r]$  is a least upper bound of $\{[x_n,r_n]\}$,
\item[$\bullet$]
 $[x,r]$  is an upper bound of $\{[x_n,r_n]\}$ and $r_n \longrightarrow r$,
 \item[$\bullet$]
 $x_n \xrightarrow{\parallel . \parallel} x$ and $r_n \longrightarrow r$.
 \end{itemize} 
 \end{theorem}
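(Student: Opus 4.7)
The plan is to prove the three statements equivalent by going around the cycle (3) $\Rightarrow$ (2) $\Rightarrow$ (1) $\Rightarrow$ (3), since each implication is short and relies on a different core fact: continuity of the norm, the triangle inequality, and Cauchy completeness of the Banach space $X$, respectively. Throughout, I will use the unpacked meaning of the formal ball order inherited from Definition~\ref{def2.4}: for $m \geq n$ in the ascending sequence we have $r_n \geq r_m$ and $\|x_n - x_m\| \leq r_n - r_m$, and an element $[y,s]$ is an upper bound precisely when $s \leq r_n$ and $\|x_n - y\| \leq r_n - s$ for every $n$.

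For (3) $\Rightarrow$ (2), I simply check that $[x,r]$ is an upper bound. Since $r_n \to r$ monotonically from above, $r \leq r_n$ holds for every $n$. Fixing $n$ and letting $m \to \infty$ in $\|x_n - x_m\| \leq r_n - r_m$, the continuity of $\|\cdot\|$ and the convergences $x_m \to x$, $r_m \to r$ yield $\|x_n - x\| \leq r_n - r$, which is exactly $[x_n,r_n] \leq [x,r]$.

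For (2) $\Rightarrow$ (1), let $[y,s]$ be any other upper bound; I must show $[x,r] \leq [y,s]$. From $s \leq r_n$ for all $n$ and $r_n \to r$ I obtain $s \leq r$. For the norm inequality, the triangle inequality gives
\[
\|x - y\| \;\leq\; \|x - x_n\| + \|x_n - y\| \;\leq\; (r_n - r) + (r_n - s)
\]
for each $n$, and letting $n \to \infty$ collapses the right-hand side to $r - s$, as required.

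For the (1) $\Rightarrow$ (3) step, which I expect to be the real obstacle because it is the only direction that actually uses Banach completeness, I will first produce a canonical upper bound from the sequence and then exploit the minimality of $[x,r]$. The real sequence $\{r_n\}$ is decreasing and bounded below by $r$ (because $[x,r]$ is an upper bound), so $r_n \to r^*$ for some $r^* \geq r$. Then $\{x_n\}$ is Cauchy in $X$: for $n \leq m$, $\|x_n - x_m\| \leq r_n - r_m$, and the right-hand side tends to $0$ with $n,m$. Completeness of $X$ yields $x_n \to x^*$ for some $x^* \in X$. A limiting argument identical to (3)~$\Rightarrow$~(2) shows $[x^*,r^*]$ is an upper bound, while letting $n\to\infty$ in $\|x_n - x\| \leq r_n - r$ shows $[x^*,r^*] \leq [x,r]$. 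Because $[x,r]$ is the least upper bound, the two must coincide, forcing $r = r^*$ and $x = x^*$, which is exactly (3). The subtle point here is that the least-upper-bound hypothesis is needed twice, once to conclude $r \leq r^*$ only trivially and once more to force the reverse inequality via the constructed upper bound $[x^*,r^*]$; without completeness of $X$ this construction would break down.
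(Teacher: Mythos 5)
Your proof is correct, but there is nothing in the paper to compare it against: Theorem~\ref{th2.6} is imported from \cite{Abramsky1994} as a quoted result and the paper supplies no argument for it, so you have in effect filled a gap rather than taken a different route. Your cyclic scheme $(3)\Rightarrow(2)\Rightarrow(1)\Rightarrow(3)$ is the standard proof of the formal-ball lemma (in the style of Edalat--Heckmann), and each step checks out: the limiting argument $\|x_n-x\|\leq r_n-r$ for $(3)\Rightarrow(2)$, the triangle-inequality estimate $\|x-y\|\leq(r_n-r)+(r_n-s)\to r-s$ for $(2)\Rightarrow(1)$, and the construction of the canonical upper bound $[x^*,r^*]$ from Cauchy completeness followed by antisymmetry for $(1)\Rightarrow(3)$. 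Two remarks. First, your entire argument rests on reading the order as $[x_n,r_n]\leq[y,s]$ iff $s\leq r_n$ \emph{and} $\|x_n-y\|\leq r_n-s$; this is the standard formal-ball order and is the only reading under which the theorem is true, but it is \emph{not} what Definition~\ref{def2.4} literally says --- there the balls are all recentred at the origin and compared by radius alone, under which the clause $x_n\to x$ in the statement would be vacuous and the equivalence false. You have silently (and correctly) repaired the paper's definition; that repair deserves to be stated explicitly. Second, your closing remark that the least-upper-bound hypothesis is used ``once to conclude $r\leq r^*$ only trivially'' is slightly off: $r\leq r^*$ follows already from $[x,r]$ being \emph{an} upper bound ($r\leq r_n$ for all $n$), and leastness is used only once, to force $[x,r]\leq[x^*,r^*]$. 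Your observation that completeness of $X$ is genuinely needed in $(1)\Rightarrow(3)$ is accurate and is consistent with the standing assumption that $X$ is a Banach lattice.
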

  
 \begin{definition} [\cite{Abramsky1994}] \label{def2.7}
 Suppose that $P_1$ and $P_2$ are two posets. A map $T : P_1 \longrightarrow P_2$ is called monotone increasing, if for all 
 $x_1 ,x^{'} _1 \in P_1$ such that $x_1 \leq x^{'}_1$ then $T(x_1) \leq T(x^{'}_1)$ in $P_2$. If $P_1$ and $P_2$ 
 are two dcpo's and $A$ is any directed subset of $P_1$, then we say that $T$ is Scott--continuous, provided $T(\vee A)=\vee T(A)$.
 For any Banach lattice 
 $(X,\|.\|)$ and a map $T : X \longrightarrow X$, we say that $T$ is sequentially convergent if for any $\{x_n\}$ such that 
 $\{Tx_n\}$ is convergent then $\{x_n\}$ is convergent. Moreover if $\{x_n\}$ has a convergent subsequence provided that 
 $\{Tx_n\}$ is convergent, $T$ is called subsequentially convergent.
 \end{definition}
\section{Main Results} \label{sec3}
The main purpose of this section is to provide the necessary preparations to prove a fundamental theorem. This will be the theoretical basis of our work in the next section. Usually, one of the most important conditions that leads to convergence is contraction. Different forms of contraction can be observed in metric and normed spaces, under the continuity and differentiability (for examples, see \cite{Hefti2015}, \cite{Ilic2011}, \cite{VanAn2015} and references therein). But when we study the discrete structures without a topological shape, the situation will be more difficult and therefore the need for special delicacy seems necessary. In addition, we must resort to constructive proofs, because providing such methods will enable us to benefit from their results in areas such as data processing. Theorem \ref{th3.4} will lead us to all these goals, but first we need to define an applicable type of contraction that will be used in the practical results of the next section.

\begin{definition}\label{def3.1}
Let $(X,\|.\|)$ be a Banach lattice and $f,g : X \longrightarrow X$ be any two maps. The map $g$ is said to be $f-$contraction, if there exists $k \in (0,1)$ 
such that  
\begin{center}
$\|f(g(x)) - f(g(y)) \| \leq k \|f(x) - f(y) \|,$
\end{center}
for all $x,y \in X$. If $f$ is the Identity map, then $g$ is a contraction. If $f$ is the Identity function,
then a Lipchitz constant for $g$ is a number $k \in \mathbb{R}^{+}$ such that for all $x,y \in X,$ 
\begin{center}
$\|g(x)- g(y)\| \leq k \| x-y \|.$
\end{center}
\end{definition}

Before presenting the main results, we need the following lemma.

\begin{lemma}\label{lem3.2}
$[B_r]$ is a dcpo, iff each sequence of $[B_r]$ converges to its maximal element.
\end{lemma}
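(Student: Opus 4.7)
The plan is to use Theorem~\ref{th2.6} as the technical bridge between order-theoretic suprema in $[B_r]$ and convergence in the norm/radius sense of Definition~\ref{def2.5}. Both implications reduce to checking that ``an ascending sequence has a supremum'' and ``an ascending sequence converges to its maximal element'' describe the same object in $[B_r]$; the real work is only in extending from ascending sequences to arbitrary directed subsets.

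For the forward direction, I would assume $[B_r]$ is a dcpo and take any ascending sequence $\{[x_n,r_n]\}$, which is itself a directed subset. The dcpo hypothesis produces a supremum $[x,r] \in [B_r]$, and Theorem~\ref{th2.6} immediately upgrades this to $x_n \xrightarrow{\|\cdot\|} x$ and $r_n \to r$. By Definition~\ref{def2.5} this is exactly convergence of the sequence to $[x,r]$, and since $[x,r]$ is the supremum it is the greatest (hence maximal) element of the chain together with its limit.

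For the reverse direction, I would start from an arbitrary directed subset $D \subseteq [B_r]$. Using separability of the underlying Banach lattice $X$ (assumed throughout this section), I would extract a countable cofinal subset of $D$, and then use the directedness of $D$ to inductively re-index it as an ascending sequence $\{[x_n,r_n]\}$ lying in $D$. By hypothesis this sequence converges to its maximal element $[x,r]$. That $[x,r]$ is an upper bound of $D$ follows from cofinality: every $[y,s] \in D$ sits below some $[x_N,r_N]$, and the limit dominates every sequence term by the upper-bound clause of Theorem~\ref{th2.6}. That $[x,r]$ is the \emph{least} upper bound is precisely the equivalence of the first two bullets in Theorem~\ref{th2.6}: any other upper bound of $D$ is an upper bound of the sequence, and convergence $r_n \to r$ forces it to dominate $[x,r]$.

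The main obstacle I anticipate is the extraction of a countable cofinal ascending chain from a general directed subset; this is the only point where the hypothesis about \emph{sequences} (rather than arbitrary directed sets) is not automatic. Separability of $X$ is the tool that legitimizes this reduction — once the countable cofinal chain is in hand, Theorem~\ref{th2.6} carries the argument in both directions almost mechanically, and the identification of the limit with the chain's maximal element is built into Definition~\ref{def2.5}.
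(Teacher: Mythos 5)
Your core argument coincides with the paper's: both directions rest on Theorem~\ref{th2.6}, which identifies ``least upper bound of an ascending sequence'' with ``norm convergence of the centers together with convergence of the radii,'' so that for ascending sequences the two sides of the lemma become the same statement read through that theorem; this is exactly how the paper's proof proceeds. Where you genuinely differ is that you confront the gap between ascending sequences and arbitrary directed subsets: the paper's proof begins with ``Suppose that $\{[x_n,r_n]\}$ is an ascending sequence'' and never returns to general directed sets, so it really only proves that $[B_r]$ is an $\omega$-cpo, whereas you explicitly extract a countable cofinal ascending chain from a directed subset $D$ and transfer the supremum along cofinality. That reduction is correct for this particular poset, but the reason is not separability of $X$: by Definition~\ref{def2.4} the order on $[B_r]$ is governed by the radii alone (the reverse of the usual order on $(0,\infty)$), so a countable cofinal chain is obtained by choosing elements of $D$ whose radii decrease to the infimum of the radii occurring in $D$ --- it is the order structure of $\mathbb{R}$, not a dense countable subset of $X$, that does the work, and you should reformulate that step accordingly. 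One further small divergence: the paper assumes every ascending sequence has $r_n \to 0$, so its limit $[x,0]$ is a maximal element of $[B_r]$ itself (matching Definition~\ref{def2.5}), while you keep a general supremum $[x,r]$, which is maximal only within the chain; given the ambiguity of ``its maximal element'' in the statement, either reading is defensible, but you should state which one you intend.
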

\begin{proof}
Suppose that $\{ [x_n,r_n]\}$ is an ascending sequence in  $[B_r]$, then  $r_n \longrightarrow 0$ as $n \longrightarrow \infty$. 
So we must show that $[x_n,r_n] \longrightarrow [x,0]$, iff $[x,0]$ is a least upper bound of $[x_n,r_n].$
If $[x,0]$ is the limit point of $[x_n,r_n]$ then $\|x_n - x \| \longrightarrow 0$
and hence $x_n \xrightarrow{\parallel . \parallel} x, \; r_n \longrightarrow 0. $
So by Theorem \ref{th2.6}, $[x,0]$ is a least upper bound for $\{ [x_n,r_n]\}.$\\
Conversely, let $[x,0]$ is a least upper bound for $\{ [x_n,r_n]\}$. By Theorem \ref{th2.6}, $ x_n \xrightarrow{\parallel . \parallel} x$ and also 
$$(x_n - x) \xrightarrow{\parallel . \parallel} 0, \; r_n \longrightarrow 0.$$
But this means that $\lim_{n \rightarrow \infty} [x_n,r_n] = [r,0].$ \qed
\end{proof}

According to Definition  \ref{def3.1}, consider $T : [B_r] \longrightarrow [B_r]$ by $T([x,r])=[f(x),kr]$. Gaining by 
transitional property, we may suppose that  $[x,r_1] \subseteq [y,r_2]$, if $r_1 \leq r_2$ for all $x,y \in X, r_1 , r_2 \geq 0$. Now we have the following result.

\begin{theorem}\label{th3.3}
$T$ is monotone increasing and Scott--continuous.
\end{theorem}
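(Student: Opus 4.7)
The plan is to verify the two conditions independently, both by leveraging the fact that the order on $[B_r]$ is ball inclusion and by using the convergence characterization in Theorem~\ref{th2.6}. The Lipschitz behaviour of $f$ with constant $k$, inherited from its role in the contraction data of Definition~\ref{def3.1}, is the key quantitative input in both parts.

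For monotonicity, I would assume $[x,r_1]\le[y,r_2]$ in $[B_r]$, i.e.\ $\overline{B}_{r_2}(y)\subseteq\overline{B}_{r_1}(x)$, so that $\|x-y\|+r_2\le r_1$. Applying the Lipschitz estimate $\|f(x)-f(y)\|\le k\|x-y\|$ and multiplying the assumed inequality by $k$ gives $\|f(x)-f(y)\|+kr_2\le k(\|x-y\|+r_2)\le kr_1$, which is exactly the ball-inclusion condition ensuring $T([x,r_1])=[f(x),kr_1]\le[f(y),kr_2]=T([y,r_2])$. The radius coordinate is handled automatically by $k>0$, and the Lipschitz bound provides the center-compatibility needed when the order is read as genuine ball inclusion rather than a purely radius-based shorthand.

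For Scott-continuity, let $A\subseteq[B_r]$ be directed. Since $[B_r]$ is a dcpo, $\vee A$ exists; by Lemma~\ref{lem3.2} combined with Theorem~\ref{th2.6}, I would extract a cofinal ascending sequence $\{[x_n,r_n]\}$ whose join realises $\vee A=[x,0]$, with $x_n\xrightarrow{\|\cdot\|}x$ and $r_n\to 0$. Applying $T$ termwise produces $\{[f(x_n),kr_n]\}$, which is ascending by the monotonicity already established; its radii satisfy $kr_n\to 0$, and the centers satisfy $\|f(x_n)-f(x)\|\le k\|x_n-x\|\to 0$, so $f(x_n)\to f(x)$. A second appeal to Theorem~\ref{th2.6} identifies the supremum of the image sequence as $[f(x),0]=T([x,0])=T(\vee A)$, which gives $T(\vee A)=\vee T(A)$.

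The main obstacle, and the point where the argument could easily fail, is the implication $x_n\to x\Rightarrow f(x_n)\to f(x)$, which needs norm continuity of $f$ at $x$. This is precisely where the contractive (hence Lipschitz) hypothesis of Definition~\ref{def3.1} is indispensable: it is the same constant $k$ that scales the radii and controls the center displacements, making the two-coordinate bookkeeping compatible. The remainder of the verification is a routine application of monotonicity and Theorem~\ref{th2.6}.
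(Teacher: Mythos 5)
Your proof is correct and follows the same two-step plan as the paper: monotonicity from the Lipschitz estimate, then Scott--continuity by pushing an ascending sequence through $T$ and invoking Theorem~\ref{th2.6} twice, with continuity of $f$ (forced by the contraction hypothesis) supplying convergence of the centers. The Scott--continuity half is essentially identical to the paper's argument, which likewise reduces to an ascending sequence whose join has radius $0$ as sanctioned by Lemma~\ref{lem3.2}. Where you genuinely diverge is the monotonicity step: you read $[x,r_1]\le[y,r_2]$ as actual ball inclusion, i.e.\ $\|x-y\|+r_2\le r_1$, and verify the full two-coordinate inequality $\|f(x)-f(y)\|+kr_2\le k(\|x-y\|+r_2)\le kr_1$; the paper instead works with its radius-only convention and writes the chain $\|f(x)-f(y)\|\le k\|x-y\|\le kr_2\le kr_1$. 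Your version is the standard formal-ball argument and is the more robust of the two, since it makes the center displacement and the radius shrinkage interact through the same constant $k$ rather than relying on the collapse of the order to a comparison of radii; it buys a proof that survives if the order on $[B_r]$ is taken at face value as set inclusion. The only point to flag is that extracting a cofinal ascending sequence from an arbitrary directed set is not automatic in a general dcpo; here it is harmless because the paper's own definitions and Theorem~\ref{th2.6} are already sequence-based, so this matches the paper's level of rigor rather than constituting a gap.
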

\begin{proof}
Let $x,y \in X$ and $[x,r_1] \subseteq [y,r_2]$. Since $f$ is a contraction,
$$\|f(x)-f(y) \| \leq k \|x-y \|.$$
So
\begin{center}
$\|f(x)-f(y) \| \leq k \|x-y \| \leq kr_2 \leq kr_1.$
\end{center}
This means that $T([x,r_1]) \subseteq T([y,r_2])$ and hence $T$ is monotone. Now suppose that $[x_n,r_n]$ be an ascending sequence in $[B_r]$, with least upper bound $[x,0]$. Since $[B_r]$  is a domain and  $f$ is a contraction, then $T$ is monotone by the first part. So
$\{T[x_n,r_n]\}$ is also ascending. But $[x,0]$ is a least upper bound of  $[x_n,r_n]$,  then by Theorem \ref{th2.6}, 
$x_n \longrightarrow x$ and $r_n \longrightarrow 0$. Since  $f$ is continuous (indeed $f$ is uniformly continuous), we have 
$f(x_n) \xrightarrow{\parallel . \parallel} f(x), \; r_n \longrightarrow 0.$
Repeatedly by Theorem \ref{th2.6}, we imply that $[f(x),0]$ is the least upper bound of ascending sequence $[f(x_n),k^n r_n]$. Hence by definition,
$T(\vee [x_n,r_n]) = \vee T([x_n,r_n])$ and so $T$ is Scott--continuous.  \qed
\end{proof}

Now, we are ready to prove the main theorem in this section. But first we say that a map $T: X \longrightarrow X$ has a fixed point like 
$[x,r]$, if $f([x,r])=[x,r]$ (see \cite{Larsen2007}).

\begin{theorem}\label{th3.4}
Suppose that  $(X, \|.\|)$ is a Banach lattice and $f: X \longrightarrow X$ a contraction map with constant $k$.
Then,  $T: [B_r] \longrightarrow [B_r]$ defined by  $T([x,r])=[f(x),kr]$ has a unique fixed point. Moreover, the iterations $\{f^n (x)\}$
converges to the fixed point of  $T$, for any given $x \in X$.
\end{theorem}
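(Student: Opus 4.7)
The plan is to run a domain-theoretic Banach iteration: pick any $x\in X$ and any $r_0>0$, and consider the orbit $T^n([x,r_0]) = [f^n(x),\,k^n r_0]$ in $[B_r]$. Since $k\in(0,1)$, the radii $k^n r_0$ strictly decrease to $0$, which by the ordering convention on $[B_r]$ (smaller radius $=$ higher element) means this orbit is ascending. Because $[B_r]$ is a dcpo, this ascending chain admits a supremum, which by Lemma~\ref{lem3.2} is some maximal element $[x^*,0]$; Theorem~\ref{th2.6} then identifies $x^*$ as the norm limit of $f^n(x)$.

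Next, I would verify that $[x^*,0]$ is a fixed point of $T$. By Theorem~\ref{th3.3}, $T$ is Scott--continuous, so
\[
T([x^*,0]) \;=\; T\!\left(\bigvee_n T^n([x,r_0])\right) \;=\; \bigvee_n T^{n+1}([x,r_0]) \;=\; [x^*,0],
\]
the last equality holding because dropping the first term of an ascending chain does not change its supremum. Unwinding $T([x^*,0])=[f(x^*),0]$ forces $f(x^*)=x^*$, so $x^*$ is a fixed point of $f$ and hence $[x^*,0]$ is a fixed point of $T$.

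For uniqueness, I would argue at the level of $f$ using the contraction inequality in the usual way: if $f(y)=y$ and $f(x^*)=x^*$, then
\[
\|x^*-y\| \;=\; \|f(x^*)-f(y)\| \;\leq\; k\|x^*-y\|,
\]
forcing $\|x^*-y\|=0$ since $k<1$. Because any fixed point $[z,s]$ of $T$ must satisfy $[f(z),ks]=[z,s]$, which forces $s=ks$ (hence $s=0$) and $f(z)=z$, uniqueness of the fixed point at the Banach-lattice level transfers to uniqueness in $[B_r]$.

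Finally, the convergence statement $f^n(x)\to x^*$ for arbitrary starting $x$ is already packaged into the construction of the supremum via Theorem~\ref{th2.6}, independently of which initial $r_0$ was chosen. The main delicate point, and the step I would be most careful about, is the bookkeeping that connects the two notions of convergence: the supremum in the poset $[B_r]$ versus norm convergence in $X$. Once Theorem~\ref{th2.6} and Lemma~\ref{lem3.2} are invoked cleanly, this bridge is immediate, but it is easy to conflate ``maximal element $[x^*,0]$'' with ``norm limit'' without pointing out that these coincide precisely because the radii collapse to $0$ along the chain.
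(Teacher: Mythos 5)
Your overall architecture (iterate $T$, obtain an ascending chain, use Scott--continuity of $T$ to show the supremum is fixed, use the contraction inequality for uniqueness) matches the paper's, and your uniqueness step --- including the observation that a fixed point $[z,s]$ forces $s=ks$, hence $s=0$ --- is actually cleaner than the paper's version. But there is one genuine gap at the heart of the argument: you never prove that the centers $f^n(x)$ form a Cauchy sequence, and the appeal to ``$[B_r]$ is a dcpo, so the ascending chain has a supremum'' cannot substitute for that. The order on $[B_r]$ compares radii only, so \emph{every} sequence whose radii decrease to $0$ is ascending, no matter how wildly its centers behave (take $x_n=ne$ for a unit vector $e$ and $r_n=1/n$). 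By Theorem~\ref{th2.6}, such a chain has a least upper bound precisely when its centers converge in norm; so invoking the existence of the supremum to deduce that $f^n(x)$ converges assumes exactly what you need to prove. The paper closes this gap with the analytic core of the theorem: the estimate
\[
\|x_{n+1}-x_n\|=\|f(x_n)-f(x_{n-1})\|\leq k\|x_n-x_{n-1}\|\leq\cdots\leq k^{n}\|x_1-x_0\|,
\]
summed geometrically to get $\|x_{n+p}-x_n\|\to 0$, followed by completeness of the Banach lattice to produce the limit $x^*$; only then do Theorem~\ref{th2.6} and Lemma~\ref{lem3.2} identify $[x^*,0]$ as the least upper bound of the orbit. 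You correctly flagged the bridge between poset suprema and norm limits as the delicate point, but the bridge runs in the opposite direction from the one you built: norm convergence (via the contraction estimate) is what delivers the supremum, not the other way around. Insert that Cauchy estimate before invoking the domain structure and the rest of your proof goes through.
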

\begin{proof}
For $x_0 \in X$,  we define the iterations $[f^n (x_0)]$,  where 
 \begin{center}
$x_{n+1}= f^n (x_n), \; n=0,1,2,...$\,.
\end{center}
Suppose that $[x_n,r_n]$ is any ascending sequence in $[B_r]$. According to Theorem  \ref{th3.3},  $T$ is monotone, i.e.,
\begin{center}
$[f(x_{n-1}),k^{n-1} r_{n-1}] \leq [f(x_n), k^n r_n], \; n>1.$
\end{center}
So, we obtain 
\begin{equation*}
\begin{array}{rcl}
\| x_{n+1}-x_n \|  &=& \| f(x_n)-f(x_{n-1}) \| \\
&\leq & k \| x_{n-1}-x_{n-2}\| \\
&\leq & k^2 \| x_{n-2}-x_{n-3}\| \\
&\vdots & \\
&\leq & k^{n-1} \| x_1-x_0\| \\
&\leq & k^{n-1} r_0,
\end{array}
\end{equation*}
where $r_0= \| x_1- x_0 \|$. Then, for any positive integer $p$,  one can obtain
\begin{center}
$\| x_{n+p}- x_n\| \leq k^{n+p-1} \| x_1- x_0\|.$
\end{center}
Since $k \in (0,1)$, as $n \longrightarrow \infty$, we imply that $\{ [f(x_n),k^n r_0]\}$ is a Cauchy sequence. Since  $[B_r]$ is a domain, then 
$\{ [f(x_n),k^n r_0]\}$ is convergent to its least upper bound. Now we compute its limit point. Let $[x_n , r_n] \longrightarrow [x,0].$
Since by Theorem \ref{th3.3}, $T$ is Scott--continuous, then
\begin{center}
$\vee \{T([x_n,r_n])\}= T(\vee \{[x_n,r_n] \}),$
\end{center}
and 
$$T([x,0])= \vee \{[f^n (x_0),k^n r_n] \}.$$
So
\begin{center}
$\{[f(x),0]\}= \vee \{[f^n (x_0),k^n r_n] \}.$
\end{center}
Hence from Theorem \ref{th3.3}, we obtain
\begin{center}
$\lim_{n \rightarrow \infty} [f^n (x_0),k^n r_n] = [f(x),0].$
\end{center}
But by Theorem  \ref{th2.6}, we imply that $f^n (x_0) \longrightarrow f(x)$ and $r_n \longrightarrow 0.$
This is equivalent to the fact that $[x,0]$  is a least upper bound of $\{[f^n (x_0),k^n r_n] \}.$
If we prove that the limit point is unique, this means  that $T([x,0])=[x,0]$   and $[x,0]$ is a fixed point of $T.$
So, we'll prove the uniqueness. Let $[y, \lambda] $ be another fixed point of  $T$. Then either  $[y, \lambda] \leq [x,0]$ or
$ [x,0] \leq [y, \lambda] $, since both of them are least upper bounds. Suppose that $[y, \lambda] \leq [x,0]$,
then
\[
\begin{array}{rcl}
\| y-x \| &=& \|T(y)-T(x)\| \\
&=& \|T^n (y)-T^n (x)\| \\
&\leq& k^n \|T(y)-T(x)\|.
\end{array}
\]
As $n \longrightarrow \infty,$ we have  $y=x,$ since  $k \in (0,1).$ \qed
\end{proof}
\section{Applications to Graded Formal Concepts and Closed Itemsets } \label{sec4}
When a computational method manifests itself in the form of a known mathematical structure, it has two important advantages. The first is that, the accuracy of the calculation will go up as much as possible, and the second is increasing the possibility of further development. With this approach, two important objectives are provided in this section, both are direct and indirect results of the fundamental Theorem \ref{th3.4}:

\begin{itemize}
\item[1.]
With help of this theorem, an accurate method based on constructive non-linear analysis algorithms will be presented, which calculates the formal concepts with a special method.
\item[2.]
Equipping qualitative data with the topological structure of the domain, enables us to rank the data arrangement by assigning a related measure to a set. At this point, again using the results of Theorem \ref{th3.4}, we present the most basic purpose of this paper, which is the definition of graded Galois lattices and closed itemsets.
\end{itemize}

As a general definition, a concept is an abstract idea that builds the blocks of human thought. One way to describe it, is using the notion of objects and attributes. This can be done by considering a tabular data includes extents and intents. By means of this data matrix, one can construct a complete lattice. Some special maximal elements of this lattice are called formal concepts. As another scope, we may imagine this matrix as a set of objects in the rows that have some features in the columns. In the first step, to describe it as a mathematical model, we need the definition of the context.

 \begin{definition}[\cite{Ganter1999}]\label{def4.1}
 A formal context is a triplet $(O,A,R)$ where $O$ and $A$ are non-empty sets and $R$ is a binary relation between $O$ and $A$. Elements of
 $O$ and $A$ are called objects and attributes, respectively. For $x \in O$ and $y \in A$, $(x,y) \in R$ means that the object $x$ is related to attribute $y$ by the relation $R$. Sometimes we write $xRy$. Also $(x,y) \notin R$ means that the object $x$ does not have  the  feature $y$ or $y$ is not attributed to $x$ under $R$.
 \end{definition}
 
To compute the formal concepts, there are two special operators called derivation operators.

\begin{definition}[see \cite{Ganter1999}]\label{def4.2}
For a given context $(O, A, R)$   and the sets $X \subseteq O$ and $Y \subseteq A$, we define
\begin{center}
$X' = \{y \in A: xRy, \forall x \in X\}$,\\
$Y' = \{x \in O: xRy, \forall y \in Y\}$.
\end{center}
$X'$ is called the set of attributes common to all objects in $X$. So $Y'$ is the set of objects possessing all the attributes in $Y$.
There are two related operators
\[
\begin{array}{rcl}
f: P(O) & \rightarrow & P(A) \,\, , \\
x & \mapsto & x'
\end{array}
\]
\[
\begin{array}{rcl}
g: P(A) & \rightarrow & P(O), \\
y & \mapsto & y'
\end{array}
\]
where $P(O)$ and $P(R)$ are the power sets of $O$ and $A$, respectively. The maps $f$ and $g$ are called derivation operators and in general, the pair $(f,g)$ is called the Galois connection. A formal concept in the context $(O, A, R) $ is a pair $(X, Y)$, where $X \subseteq O$, $Y \subseteq A$ and we have $f(X)=Y , g(Y)=X$. Here $X$ is called its extent and $Y$ is called its intent. In general, $(X, Y)$ may call a formal concept.
\end{definition}

In general, the Galois connection $(f,g)$ has the following properties.
\begin{itemize}
\item[(G1)] if $Y_{1}\subseteq Y_{2}$, then $g(Y_{2})\subseteq g(Y_{1})$,
\item[(G2)] $X \subseteq g(Y)$, iff $Y \subseteq f(X)$,
\item[(G3)] if $X_{1}\subseteq X_{2}$, then $f(X_{2})\subseteq f(X_{1})$,
\item[(G4)] $X\subseteq g(f(X))$,
\item[(G5)] $ Y\subseteq f(g(Y))$,
\end{itemize}
for $X, X_1, X_2 \subseteq O$ and $Y, Y_1, Y_2 \subseteq A$.\\
The conditions (G4) and (G5) lead us to set a new operator on $P(O)$, called the closure operator.

\begin{definition} [\cite{Ganter1999}] \label{def4.3}
Suppose that $(f,g)$ is a Galois connection on $(O,A,R)$. Take 
$$h=fog: P(A) \rightarrow P(A), $$
$$h'=gof: P(O) \rightarrow P(O). $$
The operators $h$ and $h'$ are called Galois closure operators.
\end{definition}

One can easily check that $h$ and $h'$ have the following properties.
\begin{itemize}
\item[(C1)] $Y \subseteq h(Y)$, (C$'$1) $X \subseteq h'(X)$  (extension),
\item[(C2)] $h(h(Y))= h(Y)$, (C$'$2) $h'(h'(X))= h'(X)$ (idempotency),
\item[(C3)] If $Y_1 \subseteq Y_2$, then $h(Y_1) \subseteq h(Y_2)$, (C$'$3) If $X_1 \subseteq X_2$, then $h'(X_1) \subseteq h'(X_2)$ (monotonicity),
\end{itemize}
for each $Y, Y_1, Y_2 \subseteq A$ and $X, X_1, X_2 \subseteq O$.\\
There is a natural way to order the concepts. This order makes the set of all concepts as a complete lattice.

\begin{definition}[\cite{Lambrechts2012}] \label{def4.4}
Assume that $(X_1,Y_1)$ and $(X_2,Y_2)$ are two concepts of the context $(O,A,R)$. We define $(X_1,Y_1) \leq (X_2,Y_2)$, if and only if
$X_1 \subseteq X_2$ (or equivalently $Y_2 \subseteq Y_1$). Here $(X_1,Y_1)$ is called a subconcept of $(X_2,Y_2)$, while $(X_2,Y_2)$
is called a superconcept of $(X_1,Y_1)$.
\end{definition}

The relation $\leq$ defines a partial order on $(O,A,R)$, called the concept order. One can show that $(G(O,A,R), \leq)$, the set of all concepts on $(O,A,R)$ with $\leq$, is a complete lattice (see \cite{Lambrechts2012}). We call it as concept lattice or Galois lattice.

There is an important notion pertained to a context, called data mining context. During the recent years, our lives have been encountered with the rapid development of information science and technology. Unlike the accumulation of big data, we have been trapped into the situation of "wealth of data and poor in knowledge". The data mining emerged, whose aim is to demonstrate and discover the potential rules and extract the useful knowledge from the data. This field of analysis has become a very active area in computer science \cite{Hualing2012}. It has a successful application in marketing analysis (\cite{Kumar2011}, \cite{Turcinek2012}), financial investment (\cite{Fiol-Roig2010}, \cite{Kovalerchuk2005}), health \cite{Jothi2015}, environmental  analysis (\cite{Pereira2008}, \cite{Tuysuzoglu2018}) and product manufacturing \cite{Vazan2017}.

\begin{definition}[Data Mining Context \cite{Pasquier1999}]\label{def4.5}
Let $\mathcal{O}$ and $\mathcal{I}$ be the finite sets of objects and items, respectively. A data mining context is a triple $\mathcal{D}=(\mathcal{O}, \mathcal{I}, R)$, where $R$ is a binary relation between the objects and items. The couple $(o,i) \in R$ means that the object $o$ is $R$-related to the item $i$.
\end{definition}

In data mining model, the transaction Ids and items will replace to the objects and attributes in the given context, respectively. So the tabular form of representing data includes a matrix form of data with transaction Ids in rows and itemsets in columns. This matrix is called a database.
In a database, closed itemsets are so important, because they are the optimal elements in the closed itemset lattice. There is a direct relation between the closed itemset lattice and Galois lattice. One can drive the closed itemset lattice from Galois lattice   \cite{Garriga2008}. Also, each closed itemset can be interpreted as the optimal collection possessing some specific features. An itemset $C \subseteq \mathcal{I}$ from a data mining $\mathcal{D}$ is closed, iff $h(C)=C$ (see Definition \ref{def4.3}).

It is so important to compute the minimal (smallest) closed itemset containing a given itemset $I \subseteq \mathcal{I}$. There are many constructive algorithms to reach the solution (for example, see \cite{Agrawal1993}, \cite{Agrawal1994}). Running time and accuracy are two essential matters. Moreover, designing a mathematical pattern to compute the solution, in general case, is so vital. We do this by computing the sequence of iterations on the closure operator. 
Let $\mathcal{C}$ be the set of closed itemsets derived from $\mathcal{D}$, and using the operator $h$. ${\mathcal{L}}_c=(\mathcal{C}, \leq)$ is a complete lattice called the closed itemset lattice \cite{Pasquier1999}.

A small finite context can easily be represented by a tabular database. But our needs in data mining like the time-series in regulatory gene expression \cite{Brazma2001} or marketing transaction \cite{Pattanayak2008} are so bigger than the normal cases. Moreover, most of computational methods such as the Next Closure Algorithm-Simulation (see \cite{Agrawal1994}) are based on closure Galois connections and closure operator.\\
Here, we are going to follow two main purposes:
\begin{itemize}
\item[(1)]
To enhance the power of data mining techniques, we see it as a set-structure mathematical object.
\item[(2)]
In large amount of data and calculation, the running time is so vital. To employ the best rate in converging to the solution, the nonlinear mathematical algorithms like iteration methods (applied in Theorem \ref{th3.4}) help us to find the optimal solution, from theoretical point of view.
\end{itemize}

A stronge algorithm to compute the concept lattice of a context is the "Next Closure" algorithm described in \cite{Belohlavek2008}. The core idea in this algorithm is based on $\downarrow\uparrow$ operator which is really a closure operator (see \cite{Belohlavek2008}). This can be taken from the Galois connections. In our paper, $\downarrow\uparrow$ is extended by a new operator that can construct without the presence of Galois connections, and from theoretical points of view, it can reduce a two sides path, into a one operation. Therefore, we expect the execution time to be significantly reduced.

At the same time, this extension can yield a good power to do the job in abstract patterns (based on extended context). Moreover, the optimal points (formal concept) are in fact the fixed points of our closure operator, which are calculated by iteration method. So, from theoretical point of view, there is really no other shorter algorithm to reach this. To see a good comparison with the Next Closure algorithm, the time complexity of the later algorithm for a context $B=(X, Y, I)$ is $O(|X|.|Y|.|B|)$. So besides of the more accuracy, in our algorithm, the time complexity is at most equal to the Next Closure one.

Before presenting our new results, we begin with the set-structure definition of a context.

\begin{definition}\label{def4.6}
Let $O$ and $A$ be two sets (called objects and attributes).  A concept operator is a monotone decreasing  map $CO: P(O) \rightarrow P(A)$.
The triplet $(O, A, CO)$ is called a context with extent and intent sets $O$ and $A$, respectively.
For $X \subseteq P(O)$ and $Y \subseteq P(A)$, we say that the pair $(X,Y)$ is a formal concept, when $CO(X)=Y$ iff $CO^{-1}(Y)=X$.
\end{definition}

Definition \ref{def4.6} helps us to apply freely the definition of concepts in the case of infinite database and even without any tabular form. According to (C3), a closure operator is always monotone increasing. This gives rise a method to find the graded Galois lattice and also the graded closed itemsets in data mining context. We remind that for any set $S$, the opposite order $\subseteq^{'}$ on $P(S)$ is defined as
\begin{center}
$A \subseteq B$, iff $B \subseteq^{'} A, \forall A, B \in P(S).$
\end{center}
It is clear that if $S$ is a complete lattice, then $(P(S), \subseteq^{'})$ is also a complete lattice. Moreover,
$$f: (P(S),\subseteq) \rightarrow (P(S),\subseteq),$$
 is ascending, iff 
$$f^{'}: (P(S), \subseteq^{'}) \rightarrow (P(S), \subseteq^{'}),$$
  is descending on the opposite poset. So the constructive result of our method can be summarized in the following diagram:
\begin{center}
\begin{tabular}{c|c}
$R$            &          $A$                     \\
\hline
$O$           &          $ \square   $          \\
\end{tabular}
$\longrightarrow$
\begin{tabular}{c|c}
$\subseteq$                      &          $(P(A), \subseteq^{'})$                     \\
\hline
$(P(O), \subseteq)$           &            $ \bigcirc    $                                  \\
\end{tabular}
\end{center}
\begin{center}
$\longrightarrow$
$\left\{
\begin{array}{l}
h: (P(O), \subseteq) \rightarrow   (P(O), \subseteq), (CO)oh: (P(O), \subseteq) \rightarrow   (P(A), \subseteq^{'}),\\
h^{c}: (P(A), \subseteq^{'}) \rightarrow   (P(A), \subseteq^{'}), (CO)^{-1}oh^{c}: (P(A), \subseteq^{'}) \rightarrow   (P(O), \subseteq).
\end{array} \right.$
\end{center}

The couple $((CO)oh, (CO)^{-1}oh^{c})$ is a concept operator and $(CO)^{-1}oh^{c}$ is also called the itemset operator. 
A pair $(X,Y) \subseteq O \times A$ is called a fixed point of $((CO)oh, (CO)^{-1}oh^{c})$, iff $(CO)^{-1}oh^{c}(Y)=Y$ and $(CO)oh(X)=X$.
According to our construction, it is clear that the formal concepts of the context $(O, A, R)$ 
are just the fixed points of $((CO)oh, (CO)^{-1}oh^{c})$ and the fixed points of $(CO)^{-1}oh^{c}$
are exactly the closed itemset of the data mining context $\mathcal{D}=(\mathcal{O}, \mathcal{I}, R)$.

All computational results in tabular data context have taken their validities from the basic concept lattice theorem (see \cite{Lambrechts2012}, Theorem 2.2.1). Also many--valued context has recently studied by researchers (for example, \cite{Qi2011}, \cite{Yan2007}).

\begin{definition} \label{def4.7}
In a many--valued context, a graded closed itemset containing the set $I \subseteq \mathcal{I}$ with grade $r$ is the least closed ball $B_r(0)$ such that 
$I \subseteq B_r(0)$. Suppose that $O \subseteq \mathcal{O}$ is the pre-image of $I$.
The least closed ball $A_s(0)$ containing $O$ is called the closed Id-transaction of grade $s$. 
$(A_s(0), B_r(0))$ is called a formal concept of grade $(r,s)$. 
$L_G=\{(A_s(0), B_r(0)): r,s \in \mathbb{N}\}$ is called the graded lattice.
\end{definition}

The following result, which is the direct consequence of Theorem \ref{th3.4}, extends the Theorem 2.2.1 in \cite{Lambrechts2012} to generalized form of a many--valued  context (see \cite{Assaghir2009}). It must be emphasized that $CO$ always induces an $h$-contraction map on any Banach lattice.

\begin{theorem} \label{th4.8}
Let $(O, A, R)$ be a many--valued context. Then $(G(O, A, R), \leq)$ is a graded complete lattice. For each $X \subseteq O$ and $Y \subseteq A$, the formal concept $(A_C, B_C)$ contains $(A, B)$ is $((CO)oh(A), (CO)^{-1}h^{c}(B))$.
\end{theorem}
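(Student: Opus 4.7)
The plan is to deduce Theorem 4.8 as a direct application of the fixed point result Theorem 3.4, using the ball-domain $[B_r]$ as the ambient space in which the graded lattice is read off. First I would embed the relevant subsets of $O$ and $A$ into the Banach lattice by assigning to each $X\subseteq O$ the smallest closed ball $A_{s}(0)$ containing $X$ (and analogously $B_{r}(0)$ for subsets of $A$), exactly as in Definition 4.7. The monotone decreasing concept operator $CO:P(O)\to P(A)$ then factors through these domains, and the two maps highlighted in the construction diagram after Definition 4.6, namely $T_{1}=(CO)\circ h$ and $T_{2}=(CO)^{-1}\circ h^{c}$, lift to self-maps of the ball-domain after composition with the evident identifications between $(P(O),\subseteq)$ and $(P(A),\subseteq^{\prime})$.

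Next I would verify the contraction hypothesis. The sentence immediately preceding the theorem asserts that $CO$ always induces an $h$-contraction on any Banach lattice, so there exists $k\in(0,1)$ with
\[
\|T_{i}(x)-T_{i}(y)\|\leq k\,\|x-y\|,\qquad i=1,2,
\]
for every $x,y$ in the ambient Banach lattices. Theorem 3.3 then shows that the induced maps $\widetilde{T}_{i}[x,r]=[T_{i}(x),kr]$ are monotone and Scott-continuous on $[B_{r}]$. Applying Theorem 3.4 to $\widetilde{T}_{1}$ with seed $A$, and to $\widetilde{T}_{2}$ with seed $B$, yields unique fixed points $[A_{C},0]$ and $[B_{C},0]$, and the iterative part of that theorem delivers
\[
A_{C}=\lim_{n\to\infty}T_{1}^{\,n}(A),\qquad B_{C}=\lim_{n\to\infty}T_{2}^{\,n}(B).
\]
Because $T_{1},T_{2}$ are themselves closure operators (by properties (C1)--(C$'$3) applied to the original Galois-type closures), these iterations stabilise after one step, giving precisely $A_{C}=(CO)\circ h(A)$ and $B_{C}=(CO)^{-1}\circ h^{c}(B)$. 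Extensivity $(A,B)\leq (A_{C},B_{C})$ in the sense of Definition 4.4 follows from (C1) and (C$'$1), and uniqueness of the fixed point in Theorem 3.4 identifies this pair as the minimal formal concept lying above $(A,B)$.

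For the graded complete lattice assertion, I would define meets and joins family-wise by
\[
\bigwedge_{\alpha}(A_{\alpha},B_{\alpha})=\bigl(\,(CO)^{-1}\!\circ h^{c}(\textstyle\bigcup_{\alpha}B_{\alpha}),\,\bigcap_{\alpha}B_{\alpha}\bigr),
\]
with the dual formula for joins, and read the grade $(r,s)\in\mathbb{N}\times\mathbb{N}$ off the radii of the smallest circumscribing balls as in Definition 4.7. Existence of these suprema and infima inside the ball-domain is guaranteed by Lemma 3.2, which ensures $[B_{r}]$ is a dcpo whose ascending sequences converge to their maximal element; compatibility of the grading with $\leq$ is immediate from the reverse-inclusion order on centred balls. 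The step I expect to be the main obstacle is the bare assertion that $CO$ induces an $h$-contraction with some intrinsic $k\in(0,1)$: one must show that the many-valued entries of $R$ control the norm distortion produced by one application of $CO$, because otherwise Theorem 3.4 is only invoked formally. Once this Lipschitz constant is pinned down from the structure of $(O,A,R)$, every remaining ingredient--monotonicity, Scott-continuity, fixed point existence and uniqueness, and completeness of the lattice--is already available verbatim from Theorems 3.3 and 3.4 and Lemma 3.2.
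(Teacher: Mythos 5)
The paper itself supplies no argument for Theorem~4.8 beyond the remark that it is a ``direct consequence'' of Theorem~3.4 together with the unproved assertion that $CO$ induces an $h$-contraction; your write-up is therefore already more explicit than the source, and you are right to single out the contraction hypothesis as the weak point. But the gap is worse than a missing Lipschitz estimate: the two halves of your argument are mutually exclusive. Theorem~3.4 produces a \emph{unique} fixed point of the contraction $\widetilde{T}_1$, and the iteration $T_1^{\,n}(A)$ converges to that same point for \emph{every} seed $A$; at the same time you claim the iteration stabilises after one step at $A_C=(CO)\circ h(A)$, which depends on $A$. Both can hold only if $(CO)\circ h$ is constant. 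Indeed, an idempotent map $g$ satisfying $\|g(x)-g(y)\|\le k\|x-y\|$ with $k<1$ has at most one point in its image: applying the inequality to two fixed points $a=g(a)$ and $b=g(b)$ gives $\|a-b\|\le k\|a-b\|$, hence $a=b$. So a closure operator that is a genuine contraction collapses the entire concept lattice to a single node, and Theorem~3.4 cannot deliver the family of distinct formal concepts $(A_C,B_C)$ that the statement requires. No choice of $k\in(0,1)$ extracted from the entries of $R$ repairs this; the obstacle is structural, not computational.

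A smaller but still genuine issue concerns the ``graded complete lattice'' claim. You lean on Lemma~3.2, but a dcpo only guarantees suprema of directed families, not the arbitrary meets and joins needed for completeness, and your proposed meet formula would still have to be shown to land in $G(O,A,R)$ and to respect the grading --- neither follows from convergence of ascending sequences in $[B_r]$. The honest route to completeness is the classical basic theorem of concept lattices (the Theorem~2.2.1 of Lambrechts that the paper cites), with the grading attached afterwards via Definition~4.7; the fixed-point machinery of Section~3 does not contribute to that part of the statement.
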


Also as another result of Theorem \ref{th3.4}, one can see the following theorem.

\begin{theorem} \label{th4.9}
Suppose that $\mathcal{D}=(\mathcal{O}, \mathcal{I}, R)$ is a generalized many--valued data mining context (see Definition \ref{def4.6}) with a (finite or infinite) bounded dataset. Then the closed itemset lattice exists and for each $Y \subseteq \mathcal{I}$, the closed itemset containing
$Y$ is $ (CO)^{-1}h^{c}(Y))$.
\end{theorem}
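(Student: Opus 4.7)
The plan is to transport the argument of Theorem \ref{th4.8} to the attribute side of a many--valued data mining context, applying the contraction machinery of Theorem \ref{th3.4} to the itemset operator $(CO)^{-1}\circ h^c$. Using the boundedness assumption on the dataset, I will embed $\mathcal{I}$ inside a closed ball $\overline{B}_R(0)$ of a separable Banach lattice so that all subsequent iterations live inside the domain $[B_r]$ developed in Section \ref{sec2}. By the remark immediately preceding the statement, $CO$ always induces an $h$-contraction, hence the composite $f := (CO)^{-1}\circ h^c$ satisfies $\|f(Y_1)-f(Y_2)\| \le k\|Y_1-Y_2\|$ for some $k \in (0,1)$.

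Next, I lift $f$ to the domain via $T([Y,r]) = [f(Y), kr]$. Theorem \ref{th3.3} certifies that $T$ is monotone and Scott--continuous, and Theorem \ref{th3.4} then yields a unique fixed point $[Y^{\star},0]$ together with convergence of $\{f^n(Y_0)\}$ to $Y^{\star}$ for every initial $Y_0$. Initializing the iteration at $Y$ itself and invoking the extension property (C1) of the closure $h^c$ produces a nested ascending chain $Y \subseteq f(Y) \subseteq f^2(Y) \subseteq \cdots \subseteq Y^{\star}$, so $Y^{\star}$ is the smallest fixed point containing $Y$. The idempotency (C2) of $h^c$ then collapses the iteration to a single step, yielding $Y^{\star} = (CO)^{-1}h^c(Y)$; by the characterization following Definition \ref{def4.6}, this is precisely the closed itemset containing $Y$.

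Finally, to obtain $\mathcal{L}_c$ as a complete lattice, I collect the fixed--point set $\mathcal{C}=\{Z\subseteq \mathcal{I}: (CO)^{-1}h^c(Z)=Z\}$ and endow it with inclusion. The completeness is inherited from the graded complete--lattice structure of Theorem \ref{th4.8}, transferred through the opposite order $\subseteq'$ on $P(\mathcal{I})$ used in the diagram following Definition \ref{def4.6}; explicitly, meets correspond to intersections and joins are computed by applying $(CO)^{-1}\circ h^c$ to the union of the participating itemsets.

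The principal technical obstacle will be rigorously establishing the strict Lipschitz constant $k \in (0,1)$ for the composite $(CO)^{-1}\circ h^c$ in the generalized many--valued setting. The paper asserts that $CO$ induces an $h$-contraction, but converting this into a uniform contraction constant for the full composite requires careful use of the boundedness hypothesis and of the order--norm interaction on the Banach lattice --- in particular, controlling how $h^c$, defined on $(P(\mathcal{I}),\subseteq')$, interacts with the metric inherited from $X$. Should the best constant degenerate to $1$ on some sub--regime of the dataset, the argument must instead fall back on the approximate fixed--point framework alluded to in the introduction, which still guarantees a limit object inside $[B_r]$ but without the uniqueness that makes the closed--itemset formula so clean.
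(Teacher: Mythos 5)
There is a genuine gap, and it sits at the heart of your argument rather than in the technical obstacle you flag at the end. Theorem \ref{th3.4} asserts that $T$ has a \emph{unique} fixed point and that the iterates $\{f^n(x)\}$ converge to it \emph{for any given} $x$; the limit is therefore independent of the initial point. You cannot simultaneously invoke this and claim that initializing at $Y$ produces ``the smallest fixed point containing $Y$'' as a $Y$-dependent object: if the contraction hypothesis genuinely holds, every itemset iterates to the same limit and the closed itemset lattice collapses to a single element. The part of your argument that actually delivers the formula $(CO)^{-1}h^{c}(Y)$ --- extensivity (C1), monotonicity (C3) and idempotency (C2) of the closure, which give that $h^c(Y)$ is the least closed set above $Y$ and that the closed sets form a complete lattice via the standard Moore-family argument --- is classical and needs none of the contraction machinery; grafting Theorem \ref{th3.4} onto it introduces the contradiction rather than resolving it. (To be fair, the paper itself offers no proof of this theorem and merely asserts it as ``another result of Theorem \ref{th3.4},'' so you are reconstructing the route the authors gesture at; but the reconstruction as written is internally inconsistent.)

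Two secondary problems compound this. First, the remark preceding Theorem \ref{th4.8} says $CO$ induces an \emph{$h$-contraction}, which by Definition \ref{def3.1} is the relative condition $\|f(g(x))-f(g(y))\|\leq k\|f(x)-f(y)\|$; your step deducing a plain Lipschitz bound $\|f(Y_1)-f(Y_2)\|\leq k\|Y_1-Y_2\|$ for the composite $(CO)^{-1}\circ h^c$ does not follow from it, and Theorem \ref{th3.4} is stated only for plain contractions. Second, the quantity $\|Y_1-Y_2\|$ for subsets $Y_1,Y_2\subseteq\mathcal{I}$ is never defined --- the norm lives on points of the Banach lattice, not on elements of $P(\mathcal{I})$ --- so the embedding of the power set into the domain $[B_r]$ needs to be made explicit (e.g.\ via the assignment of Definition \ref{def4.7}, sending a bounded itemset to the least closed ball containing it) before any of the Section \ref{sec3} results can be applied. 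A repaired proof should either (i) drop the contraction route entirely and prove the statement from the closure-operator axioms plus the graded-ball assignment of Definition \ref{def4.7}, or (ii) restrict the contraction argument to the sub-poset of itemsets containing a fixed $Y$, where a unique fixed point is actually the desired conclusion.
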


Since both results above, have the same consequence, at the end of this section, we only consider the result of Theorem \ref{th4.9}. The importance of closed itemsets and labeled closed itemsets have been extensively discussed in \cite{Garriga2008}. But what we emphasize as the power of graded closed itemset is ordering their inclusions by level curves (here, they  are the closed balls). This will be a vital and constructive algorithm to classify and to grade the data by nonlinear classifiers. The direct consequence of these results can be applied in deep learning subject (this will be done in the next work).

Another privilege of our algorithm (in the proof of Theorem \ref{th3.4}) is to generalize the process of computation. This can be derived as a result of Theorem \ref{th3.4}. Although this result is valid for real-valued tabular databases, but it can be applied for any binary relation, since any binary   logical database is equivalent to a real-valued one.

To see a close observation, here we introduce an example. The context may be presented as a conceptual data matrix. To apply the results of 
Section \ref{sec3}, we need to change the database in many--valued context (see \cite{Assaghir2009}). In  many--valued context, the relation between objects and attributes can be presented by a parameter $\theta \in [0,1]$ as the threshold. Here we want to focus on many--valued contexts on Banach lattices.

\begin{example} \label{ex4.10}
In a health center, the following data have been recorded in Table \ref{table1} (to simplify the calculation, all data have been given between 0 and 1).
The columns $S_1, \ldots , S_4$ are symptoms of  an illness, and the rows $P_1, \ldots , P_4$ are Id transactions (list of patients). Each entry shows the degree of pain for a given patient, in terms of its symptom. Here $P= \{P_1, \ldots , P_4\}$  and $S= \{S_1, \ldots , S_4\}$.
\newpage
\begin{table}[ht]
\centering
\caption{\small A tabular data of many--valued context}
\label{table1}
\begin{tabular}{|c| c| c| c |c|}
\hline
\diagbox{$P$}{$S$}               &          $S_1$                    &              $S_2$      &            $S_3$        &             $S_4$         \\
\hline
$P_1$           &            1                          &                0.1        &            0.3             &                0            \\
\hline           
 $P_2$         &          0.3                           &               0.8        &            0.5             &                0            \\
\hline
 $P_3$         &          0.3                          &                 1          &            0.7            &                 0.5     \\
\hline
$P_4$         &          0.1                          &                 0.1          &            1             &                 1      \\
\hline
\end{tabular}
\end{table}

 Without loss of generality, we may assume that $S$ and $P$ are correspondence to the set $J_4= \{1,2,3,4\}$. Hence $S \times P$ can be represented as 16 points in the $\mathbb{R}^2$-plain. In Fig. \ref{fig1}, three of these points have been shown. 

\begin{figure}[hb]
\centering
\includegraphics[scale=0.5]{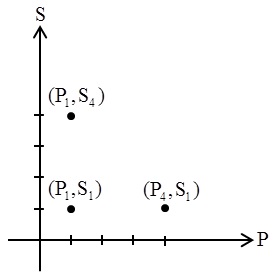}
\caption{\small Graph of $S \times P$ in $\mathbb{R}^2$}
\label{fig1}
\end{figure}

Since $\mathbb{R}^2$ is finite dimensional, all norms induce the same topology. So, we use the max--norm
$$\parallel (x,y) \parallel = \max \{|x|,|y|\}.$$

Take $k=0.1$ as the minimum of entries in the matrix of Table \ref{table1}, and define $f: \mathbb{R}^2 \rightarrow \mathbb{R}^2$ as
$f(x,y)=(kx,ky)$. A simple calculation shows that $f$ is a contraction map on $\mathbb{R}^2$. Also, one can easily show that
$CO: (\mathbb{R}^2, \parallel . \parallel) \rightarrow (\mathbb{R}^2, \parallel . \parallel)$ by $CO(A)= \bar{A}$, where $\bar{A}$ is the topological closure of $A$, is a closure operator. Now by means of Theorem \ref{th3.4}, we are able to compute the graded lattice and graded closed itemsets. Each colored rectangle in the graph of Fig. \ref{fig2} corresponds to a graded formal concept with a given grade. 

\begin{figure}[hb]
\centering
\includegraphics[scale=0.5]{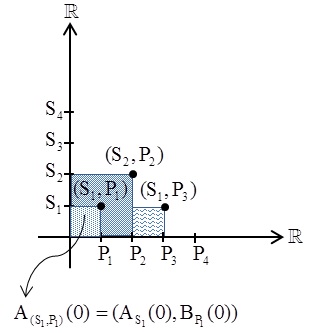}
\caption{\small Graded lattice and closed itemsets}
\label{fig2}
\end{figure}

The upper edges of each rectangle, indicates the boarder of the graded closed itemset containing its  itemsets, with the given grade. Also the black points show the nodes of the graded Galois lattice. All rectangles in Fig. \ref{fig2} can be linearized as the nested rectangles with $(0,0)$ at the origin (it is really the top of the lattice). Moreover, since $\mathbb{R}^2$ is a separable Banach lattice, one can consider a base includes the set of nested rectangles with countable widths and lengths. As a hidden observation in this example, the real power of this method is its ability to apply for continuous and uncountable datasets.
\end{example}
\section*{Conclusion} \label{concluding}
As a general overview, in this paper, a structural theorem in fixed point theory on domains has been presented and proved. The constructive results of this theorem enable us to set a new computational method based on iterations to calculate the formal concepts and closed itemsets. Also as a notable result, this leads us to present a new class of Galois lattices and closed itemsets, called graded Galois lattices and closed itemsets.

\end{document}